\renewcommand{\leq}{\leqslant}
\renewcommand{\geq}{\geqslant}
\numberwithin{equation}{section}
\newcommand{\uple}[1]{\text{\boldmath${#1}$}}
\def\stacksum#1#2{{\stackrel{{\scriptstyle #1}}
{{\scriptstyle #2}}}}
\newcommand{\bexp}{1/20}
\newcommand{\Cc}{\mathbf{C}}
\newcommand{\Zz}{\mathbf{Z}}
\newcommand{\Rr}{\mathbf{R}}
\newcommand{\Fq}{{\mathbf{F}_q}}
\newcommand{\Fqt}{{\mathbf{F}^\times_q}}
\newcommand{\mcN}{\mathcal{N}}
\newcommand{\mods}[1]{\,(\mathrm{mod}\,{#1})}
\newcommand{\fourier}[1]{\widehat{{#1}}}
\newcommand{\what}{\widehat}
\newcommand{\bessel}[1]{\widecheck{{#1}}}
\DeclareMathOperator{\Kl}{\mathrm{Kl}_2}
\DeclareMathOperator{\ET}{\mathrm{ET}}
\DeclareMathOperator{\supp}{supp}
\newcommand{\eps}{\varepsilon}
\renewcommand{\rho}{\varrho}
\DeclareMathSymbol{\gena}{\mathord}{letters}{"3C}
\DeclareMathSymbol{\genb}{\mathord}{letters}{"3E}
\newcounter{bnd}
\theoremstyle{plain}
\newtheorem{theorem}{Theorem}[section]
\newtheorem*{theorem*}{Theorem}
\newtheorem{lemma}[theorem]{Lemma}
\newtheorem{proposition}[theorem]{Proposition}
\theoremstyle{remark}
\theoremstyle{definition}
\newtheorem{remark}[theorem]{Remark}
\newcommand{\mcM}{\mathcal{M}}
\newcommand{\lf}{\lambda_f}
\renewcommand{\geq}{\geqslant}
\renewcommand{\leq}{\leqslant}
\newcommand{\ov}[1]{\overline{#1}}
\newcommand\sumsum{\mathop{\sum\sum}\limits}
\begin{document}
 
\title{Some applications of smooth bilinear forms with Kloosterman
  sums}

\author{Valentin Blomer}
\address{Mathematisches Institut, Universit\"at G\"ottingen,
  Bunsenstr. 3-5, 37073 G\"ottingen, Germany} \email{vblomer@math.uni-goettingen.de}

\author{\'Etienne Fouvry}
\address{Laboratoire de Math\'ematiques d'Orsay, Universit\' e Paris--Saclay  \\
    91405 Orsay  \\France}
\email{etienne.fouvry@math.u-psud.fr}

\author{Emmanuel Kowalski}
\address{ETH Z\"urich -- D-MATH\\
  R\"amistrasse 101\\
  CH-8092 Z\"urich\\
  Switzerland} \email{kowalski@math.ethz.ch}

\author{Philippe Michel} \address{EPF Lausanne, Chaire TAN, Station 8, CH-1015
  Lausanne, Switzerland } \email{philippe.michel@epfl.ch}

 \author{Djordje Mili\'cevi\'c}
 \address{Department of Mathematics,
Bryn Mawr College,
101 North Merion Avenue,
Bryn Mawr, PA 19010-2899, U.S.A.}
 \curraddr{Max-Planck-Institut f\"ur Mathematik, Vivatsgasse 7, D-53111 Bonn, Germany}
 \email{dmilicevic@brynmawr.edu}

 \thanks{V.\ B.\ was partially supported by the  
   Volkswagen Foundation.  \'E.\ F.\ thanks ETH Z\"urich and EPF Lausanne
   for financial
   support. Ph. M. was partially supported by the SNF (grant
   200021-137488) and the ERC (Advanced Research Grant 228304). V.\ B.,
   Ph.\ M.\ and E.\ K.\ were also partially supported by a DFG-SNF lead
   agency program grant (grant 200021L\_153647). D. M. was partially
   supported by the NSF (Grant
   DMS-1503629) and ARC (through Grant DP130100674).}

\subjclass[2010]{11M06, 11F11, 11L05, 11L40, 11F72, 11T23}

\keywords{$L$-functions, modular forms, shifted convolution sums,
  Kloosterman sums, incomplete exponential sums}

\begin{abstract}

  We revisit a recent bound of I. Shparlinski and T. P. Zhang on
  bilinear forms with Kloosterman sums, and prove an extension for
  correlation sums of Kloosterman sums against Fourier coefficients of
  modular forms. We use these bounds to improve on earlier results on
  sums of Kloosterman sums along the primes and on the error term of
  the fourth moment of Dirichlet $L$-functions.
\end{abstract}

\maketitle

\setcounter{tocdepth}{1}


\section{Statement of results}\label{intro}

\subsection{Preliminaries}

This note is motivated by a recent result of I. E. Shparlinski and
T. P. Zhang \cite{SZ} concerning bilinear forms with Kloosterman
sums. Given a prime $q$ and $m\in\Fq$, let
\[
\Kl(m;q):=\frac{1}{\sqrt{q}}\sum_{\substack{x\in\Fqt\\ xy=1}}e_q(y+mx)
\]
denote the normalized Kloosterman sum, where
$e_q (x) =\exp( 2 \pi i x /q)$. Shparlinski and Zhang (\cite [Theorem
3.1]{SZ}) proved the following theorem.

\begin{theorem}[Shparlinski--Zhang]\label{thmSZ} 
  Let $q$ be a prime number and let $\mcM,\mcN\subset [1,q-1]$ be
  intervals of lengths $M,N\geq 1$. Then we have
\begin{equation}\label{SZbound1}
  \sumsum_{m\in{\mcM},n\in\mcN}\Kl(mn;q)\ll_{\eps} q^{\eps}\Bigl( q^{1/2}+\frac{MN}{q^{1/2}} \Bigr)
\end{equation}
for any $\eps > 0$, where the implied constant depends only on $\eps$.
\end{theorem}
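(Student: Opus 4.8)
\emph{The idea.} The plan is to convert the bilinear sum, by opening the Kloosterman sum, into a product of two one-dimensional geometric sums linked only through modular inversion, and then to estimate everything by elementary means -- no Weil bound or other algebro-geometric input is needed. Writing $\overline{x}$ for the inverse of $x$ modulo $q$ and using $\Kl(t;q)=q^{-1/2}\sum_{x\in\Fqt}e_q(\overline{x}+tx)$ with $t=mn$, we get
\[
\sumsum_{m\in\mcM,\,n\in\mcN}\Kl(mn;q)=\frac1{\sqrt q}\sum_{m\in\mcM}\sum_{n\in\mcN}\sum_{x\in\Fqt}e_q(\overline{x}+mnx).
\]
Every $n\in\mcN\subset[1,q-1]$ is invertible mod $q$, so for fixed $n$ the substitution $x\mapsto\overline{n}x$ turns $\overline{x}+mnx$ into $n\overline{x}+mx$; interchanging the order of summation then gives the identity
\[
\sumsum_{m\in\mcM,\,n\in\mcN}\Kl(mn;q)=\frac1{\sqrt q}\sum_{y\in\Fqt}E_\mcM(y)\,E_\mcN(\overline{y}),\qquad\text{where }E_\mcI(a):=\sum_{i\in\mcI}e_q(ia).
\]
The square-root saving of the theorem will come entirely from the factor $q^{-1/2}$ produced here.

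\emph{Dyadic decomposition.} Since $\mcM,\mcN$ are intervals, $|E_\mcM(a)|\ll\min(M,q/\|a\|_q)$ for $a\not\equiv0\pmod q$, where $\|a\|_q$ is the distance from $a$ to the nearest multiple of $q$ (a geometric series bound). I would decompose the $y$-sum according to the dyadic sizes $\|y\|_q\asymp R$ and $\|\overline{y}\|_q\asymp T$, with $R,T$ ranging over powers of $2$ in $[1,q]$, bound $|E_\mcM(y)E_\mcN(\overline{y})|$ by $\min(M,q/R)\min(N,q/T)$, and reduce the whole estimate to controlling the counting function
\[
\nu(R,T):=\#\{\,y\in\Fqt:\ \|y\|_q\le R,\ \|\overline{y}\|_q\le T\,\}.
\]

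\emph{The counting lemma -- the main obstacle.} The crux is the essentially optimal bound $\nu(R,T)\ll_\eps(RT/q+1)\,q^\eps$. A completion-plus-Weil argument would only give $\nu(R,T)\ll RT/q+q^{1/2+\eps}$, whose secondary term is too large by a factor $\sqrt q$ to yield the theorem, so one must argue multiplicatively. If $\|y\|_q\le R$ and $\|\overline{y}\|_q\le T$, let $u,v$ be the least absolute residues of $y$ and $\overline{y}$; then $|u|\le R$, $|v|\le T$ and $uv\equiv1\pmod q$, i.e.\ $uv=1+kq$ for some integer $k$ with $|k|\le(RT+1)/q$. For each of the $O(RT/q+1)$ admissible $k$ the integer $1+kq$ (which is nonzero and of size $\ll q^2$) has $O_\eps(q^\eps)$ divisors, hence $O_\eps(q^\eps)$ factorizations $uv=1+kq$; since distinct $u$ determine distinct $y\bmod q$, the bound follows. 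This lemma is the one genuinely delicate point; everything else is bookkeeping.

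\emph{Conclusion.} Inserting $\nu(R,T)\ll_\eps(RT/q+1)q^\eps$ into the dyadic sum and using the elementary estimates $\sum_R\min(M,q/R)R\ll q\log q$ and $\sum_R\min(M,q/R)\ll M\log q$ (and the same in $T$ with $N$), the contribution of the term $RT/q$ of $\nu$ is $\ll q^{-3/2+\eps}(q\log q)^2\ll q^{1/2+\eps}$, and that of the term $1$ is $\ll q^{-1/2+\eps}(M\log q)(N\log q)\ll q^{-1/2+\eps}MN$. Summing over the $O((\log q)^2)$ pairs $(R,T)$ and renaming $\eps$ gives $\ll_\eps q^\eps\bigl(q^{1/2}+MNq^{-1/2}\bigr)$, which is exactly \eqref{SZbound1}.
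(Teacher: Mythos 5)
Your proof is correct, and it takes a genuinely different route from the one in the paper. The paper does not actually prove Theorem~\ref{thmSZ} (it cites Shparlinski--Zhang and only sketches the idea); what it proves in detail is the smoothed analogue, Proposition~\ref{propSZ}, by applying the tempered Voronoi formula of Proposition~\ref{Voronoigeneral0}: both variables are transformed at once, the Voronoi transform of $\Kl(a\cdot\,;q)$ becomes essentially $q^{1/2}\delta_{mn\equiv a\mods q}$ by Lemma~\ref{immediate}, and the resulting congruence count is controlled by $\sum_{n\equiv a\mods q}d(n)(1+nMN/(q^2Q^2))^{-2}$. You instead avoid Poisson/Voronoi altogether: the substitution $x\mapsto \overline{n}x$ decouples the two variables inside the opened Kloosterman sum, reducing everything to $q^{-1/2}\sum_{y\in\Fqt}E_{\mcM}(y)E_{\mcN}(\overline{y})$, after which sharp-cutoff geometric series bounds and a dyadic decomposition reduce the problem to counting points on the modular hyperbola $uv\equiv 1\mods q$ in a box $|u|\leq R$, $|v|\leq T$ --- which you correctly bound by $O_\eps((RT/q+1)q^\eps)$ by factoring $uv=1+kq$ and invoking the divisor bound. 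This is the same arithmetic engine as the paper's (a divisor-function count of lattice points on a shifted hyperbola), but deployed on the ``physical'' side rather than the dual side, and it handles the sharp cutoffs directly where the paper's method requires smoothing (or a subsequent partial summation). Your bookkeeping checks out: the $RT/q$ term contributes $\ll q^{1/2+\eps}$ and the $+1$ term contributes $\ll MNq^{-1/2+\eps}$, exactly as needed. The one point worth stating explicitly is that $1+kq\neq 0$ for all $k$ (so the divisor bound applies), which you do note; everything else is complete.
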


In light of the Weil bound for Kloosterman sums $|\Kl(m;q)|\leq 2$, the
estimate \eqref{SZbound1} is non-trivial as long as $MN$ is a bit larger than
$q^{1/2}$. On the other hand, if $M$ or $N$ is close to $q$, other
methods (e.g.\ the completion method) become  more efficient. In particular,
the restriction that $M$ and $N$ are $\leq q$ is not really
restrictive for applications.

The aim of this paper is two-fold. On the one hand, we put  Theorem \ref{thmSZ}
into a slightly more general context in Propositions~\ref{propSZ} and \ref{propIScuspidal}; viewing it as a correlation estimate for Kloosterman sums and a divisor function (which itself is a Fourier coefficient of an Eisenstein series), it turns out to be a consequence of a version of the Voronoi summation formula. On the other hand, we give two applications of independent interest to the fourth moment of Dirichlet $L$-functions in Theorem~\ref{472} and sums of Kloosterman sums over primes in Theorem~\ref{Klpq}; these applications are discussed in Subsection \ref{appl}.

\subsection{Variations on a theme} Our first result is a smoothed version of the bound \eqref{SZbound1}.
To state it, we use the following class of smoothing functions. For a
modulus $q\geq 1$ and a parameter $Q\geq 1$, we will consider
functions satisfying the following conditions:
\begin{equation}\label{Wbound}
\begin{split}
	W\colon [0,+\infty[\to\Cc\text{ is smooth, }
  \mathrm{Supp}(W)\subset [1/2,2],\\
  W^{(j)}(x)\ll_{j, \eps} \bigl(q^{\eps}Q\bigr)^j\,
  \text{ for any $x\geq 0$, $j\geq 0$ and $\eps>0$.}
\end{split}
\end{equation}

\begin{proposition}\label{propSZ} Let $q$ be a prime number and let
  $Q\geq 1$ be a real number. Let $W_1,W_2$ be functions
  satisfying~\eqref{Wbound}.  For any $M,N\geq 1$ and any integer $a$
  coprime with $q$, we have
\begin{equation}\label{SZboundsmooth}
\sumsum_{m,n}W_1\Bigl(\frac{m}M\Bigr)W_2\Bigl(\frac{n}N\Bigr)\Kl(amn;q)\ll_\eps  
(qQ)^{\eps}\, Q^2\,\Bigl(q^{1/2}+\frac{MN}{q^{1/2}}\Bigr).	
\end{equation}
Furthermore, if $W_3$ also satisfies~\eqref{Wbound}, then for any
$Y\geq 1$, we have
\begin{equation}\label{SZboundsmooth1}
  \sumsum_{m,n}W_1\Bigl(\frac{m}M\Bigr)
  W_2\Bigl(\frac{n}N\Bigr)W_3\Bigl( \frac{mn}{Y}\Bigr)\Kl(amn;q)\ll_\eps  
  (qQ)^{\eps}\, Q^2\,\Bigl(q^{1/2}+\frac{MN}{q^{1/2}}\Bigr).
\end{equation}
In both cases, the implied constant depends only on $\eps$.
\end{proposition}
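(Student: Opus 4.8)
The plan is to rewrite the left-hand side of \eqref{SZboundsmooth} as the correlation sum $\sum_\ell D(\ell)\Kl(a\ell;q)$, where $D(\ell)=\sum_{mn=\ell}W_1(m/M)W_2(n/N)$ is a smooth surrogate for the divisor function (the Fourier coefficient of an Eisenstein series alluded to above), and then to apply the Voronoi summation formula for $D$ in the form of a double Poisson summation. Concretely, for $n$ coprime to $q$, one expands the trace function $m\mapsto\Kl(amn;q)$ into its finite Fourier series modulo $q$ and uses the self-duality of Kloosterman sums under additive Fourier transform, namely $\sum_{t\bmod q}\Kl(t;q)e_q(ht)=\sqq\,e_q(-\bar h)$ for $(h,q)=1$ and $=0$ for $q\mid h$ (here $\bar h$ is an inverse of $h$ modulo $q$); Poisson summation in $m$ then yields the exact identity
\[
\sum_m W_1\Bigl(\frac mM\Bigr)\Kl(amn;q)=\frac{M}{\sqq}\sum_{(j,q)=1}\what{W_1}(Mj/q)\,e_q(-an\bar j),
\]
with $\what{W_1}$ the Fourier transform of $W_1$. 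Carrying out Poisson summation once more in $n$, this time against a genuine additive character, one arrives at the exact dual identity
\[
\sumsum_{m,n}W_1\Bigl(\frac mM\Bigr)W_2\Bigl(\frac nN\Bigr)\Kl(amn;q)
 =\frac{MN}{\sqq}\sumsum_{\substack{j,e\in\Zz,\ (je,q)=1\\ je\equiv a\,(q)}}\what{W_1}(Mj/q)\,\what{W_2}(Ne/q),
\]
which is precisely the Voronoi summation formula for the smoothed divisor function $D$.

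The second step is to estimate the dual sum. By \eqref{Wbound} each $\what{W_i}$ decays rapidly on the scale $q^\eps Q$, so up to a negligible tail the sum is confined to $1\le|j|\ll q^{1+\eps}Q/M=:J$ and $1\le|e|\ll q^{1+\eps}Q/N=:E$ (note $j,e\ne0$ since $q\nmid a$). The key point is that the number of admissible pairs must \emph{not} be estimated as (length of the $j$-range)$\,\times\,$(density of the progression $e\equiv a\bar j\pmod q$): this would contribute an inadmissible term of size $\asymp\min(M,N)\sqq\,Q$. Instead, one groups the pairs by the value $w=je$ of the product: $w$ runs over $\ll 1+JE/q$ residue classes modulo $q$ inside a fixed dyadic block, and for each such $w$ --- which in the effective range satisfies $|w|\ll(qQ)^{O(1)}$ --- the number of factorisations $w=je$ with $|j|$ of a given size is $\ll|w|^\eps\ll(qQ)^\eps$ by the divisor bound. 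Summing the rapidly decaying weights over dyadic blocks then gives
\[
\Bigl|\sumsum_{m,n}W_1\Bigl(\frac mM\Bigr)W_2\Bigl(\frac nN\Bigr)\Kl(amn;q)\Bigr|
 \ll(qQ)^\eps\,\frac{MN}{\sqq}\Bigl(1+\frac{JE}{q}\Bigr)
 =(qQ)^\eps\Bigl(\frac{MN}{\sqq}+\sqq\,Q^2\Bigr),
\]
which is even slightly stronger than \eqref{SZboundsmooth} --- it carries $Q^2$ only on the first term --- and implies it since $Q\ge1$.

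For \eqref{SZboundsmooth1} one first notes that the sum is empty unless $Y\asymp MN$ (the three supports must overlap), so we may assume $MN/8\le Y\le 8MN$; then $mN/Y\asymp 1$ for $m\asymp M$. The idea is to absorb $W_3(mn/Y)$ into the $n$-aspect: for each fixed such $m$, the function $n\mapsto W_2(n/N)W_3(mn/Y)$ has the form $G_m(n/N)$ with $G_m$ supported in a fixed compact subset of $(0,\infty)$ and satisfying $G_m^{(k)}\ll_k(q^\eps Q)^k$, uniformly in $m$. The first Poisson summation (in $n$) now proceeds with this weight exactly as before; and in the second Poisson summation (in $m$) the weight $m\mapsto W_1(m/M)\what{G_m}(Nj/q)$ is again smooth, with the same bounds uniformly in $j$ (up to the rapid decay in $j$), because $\partial_m$ applied to $W_3(mn/Y)$ costs only a factor $\ll q^\eps Q/M$. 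Hence both the Poisson-in-$m$ computation and the divisor-bound estimate of the dual sum go through verbatim, yielding \eqref{SZboundsmooth1} with the same bound.

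The two Poisson summations and the self-duality of $\Kl$ are routine. I expect the real work to be the second step, where one must see that the pairs $(j,e)$ with $je\equiv a\pmod q$ have to be counted \emph{multiplicatively}, through the divisor bound on the product $je$, rather than along arithmetic progressions --- this is exactly what converts the dual main term into the clean bound $\sqq\,Q^2+MN/\sqq$. A secondary technical nuisance, pertinent only to \eqref{SZboundsmooth1}, is keeping track of the (mild, uniform) $m$-dependence of the weight after the first summation.
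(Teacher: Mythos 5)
Your proof is correct and follows essentially the same route as the paper: your two Poisson summations combined with the self-duality of $\Kl$ are exactly the tempered Voronoi formula (Proposition~\ref{Voronoigeneral0} together with Lemma~\ref{immediate}) that the paper invokes, and your multiplicative count of the pairs $(j,e)$ with $je\equiv a\pmod q$ via the divisor bound on $w=je$ is the same diagonal estimate $\sum_{n\equiv a\,(q)}d(n)(1+nMN/(q^2Q^2))^{-2}$ used there. The only point to tidy is that your ``exact dual identity'' silently discards the terms with $q\mid n$ (relevant only when $N\gg q$, and trivially admissible since $\Kl(0;q)=-q^{-1/2}$).
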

The inequalities \eqref{SZboundsmooth} and \eqref{SZboundsmooth1}
could be easily deduced from the result of Shparlinski and Zhang by
summation by parts with respect to the variables $m$ and $n$. In \S
\ref{par2}, we will give an alternative proof based on~\cite[Prop.~2.2]{FKMd3}.
The $Q$-dependence in Proposition~\ref{propSZ} is presented in a compact form well suited for our applications but it is not fully optimized otherwise (in particular, for Theorem~\ref{472} we will be using $Q=q^{\varepsilon}$); our proof actually yields a better $Q$-dependence in some other ranges.

We can view the bounds \eqref{SZboundsmooth} and
\eqref{SZboundsmooth1} essentially as sums over a single variable weighted by the
divisor function $d$. The advantage of our proof of Proposition \ref{propSZ} is that it provides naturally an automorphic generalization, where the divisor function is replaced with Fourier 
coefficients of modular forms.

\begin{proposition}\label{propIScuspidal} Let $(\lf(n))_{n\geq 1}$ be
  the Hecke eigenvalues of a holomorphic cuspidal Hecke eigenform $f$
  of level $1$, normalized so that $|\lf(n)|\leq d(n)$.  Let $q$ be a
  prime number, and let $W$ be a function satisfying~\eqref{Wbound}
  with $Q=1$. Let $a$ be an integer coprime to $q$. For any $N\geq 1$
  and any $\eps>0$, we have
\begin{equation}\label{SZcusp}
  \sum_{n\geq 1}\lf(n)\Kl(an;q)
  W\Bigl(\frac{n}N\Bigr)\ll_{\eps, f} (qN)^{\eps}\Bigl(q^{1/2}+\frac{N}{q^{1/2}}\Bigr) 	
\end{equation}
where the implied constant depends only on $f$ and $\eps$.
\end{proposition}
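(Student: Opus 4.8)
The plan is to imitate the proof of Proposition~\ref{propSZ}, replacing the divisor function by $\lf$ and the classical Voronoi formula by the Voronoi summation formula for holomorphic cusp forms of level $1$. First I would open up the Kloosterman sum and write, for $n$ coprime to $q$,
\[
\Kl(an;q)=\frac{1}{\sqrt q}\sum_{h\in\Fqt}e_q(h\bar a\bar n+h),
\]
where $\bar x$ denotes the inverse of $x$ modulo $q$ (one may drop the condition $q\nmid n$ at the cost of an error of size $O((qN)^\eps(1+N/q))$, which is absorbed into the right-hand side). Thus the sum $\mcS$ to be estimated becomes
\[
\mcS=\frac{1}{\sqrt q}\sum_{h\in\Fqt}e_q(h)\sum_{n\geq 1}\lf(n)\,e_q(\overline{ah}\,\bar n)\,W\Bigl(\frac nN\Bigr)\Bigl(\text{up to the }q\mid n\text{ error}\Bigr),
\]
so the inner sum is a sum of $\lf(n)$ twisted by the additive character $n\mapsto e_q(b\bar n)$ with $b=\overline{ah}$, restricted to a smooth dyadic window of length $N$. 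This is exactly the type of sum handled by the level-$1$ Voronoi formula in the ``$b\bar n$'' (i.e.\ inverse) form, which one obtains by combining the additive-twist Voronoi formula with the computation of the Gauss-type sum attached to $n\mapsto e_q(b\bar n)$; alternatively one can cite the relevant ready-made statement (e.g.\ the version used in \cite{FKMd3} or the Kloosterman-twisted Voronoi formula).

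Applying Voronoi to the inner sum transforms $\sum_n \lf(n) e_q(b\bar n) W(n/N)$ into a dual sum $\sum_{m} \lf(m)\,(\text{Kloosterman or Ramanujan-type factor in }q,m,b)\,\widetilde W(m)$, where the dual length is essentially $m\asymp q^2/N$ (there is an extra factor $q$ relative to the classical $\bmod\ q$ situation because of the $\bar n$), the arithmetic factor has modulus $\ll q^{1/2+\eps}$ by Weil, and $\widetilde W$ is a Hankel-type transform that, thanks to \eqref{Wbound} with $Q=1$, is negligibly small unless $m\ll (qN)^\eps\, q^2/N$ and otherwise satisfies $\widetilde W(m)\ll_{f,\eps}(qN)^\eps (N/q)\cdot(\text{mild }m\text{-decay})$. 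Summing the arithmetic factor over $h\in\Fqt$ first — which is the point of having pulled the $h$-sum outside — produces a complete sum over $h$ of a product of two Kloosterman/additive exponentials; this complete sum is $\ll q^{1+\eps}$ generically and $\ll q^{1/2+\eps}$ off a thin set, exactly as in the $d(n)$ case. Collecting: the main term is of size $\frac{1}{\sqrt q}\cdot (\text{length }\asymp q^2/N\text{ of }m)\cdot (\text{size }\asymp q^{1/2}\text{ of the }h\text{-sum})\cdot (N/q)=q^{\eps}q^{1/2}$ when $N\le q$, and of size $\frac{1}{\sqrt q}\cdot 1\cdot q\cdot (N/q)=q^{-1/2}N$ when $N\ge q$, which together give the stated bound $(qN)^\eps(q^{1/2}+N/q^{1/2})$.

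The main obstacle is bookkeeping the archimedean transform cleanly: one needs that the Hankel/Bessel transform $\widetilde W$ arising from Voronoi for a weight-$k$ form preserves the derivative bounds \eqref{Wbound} (up to $(qN)^\eps$ losses and harmless polynomial factors in $k$), so that the dual sum is essentially supported on $m\ll (qN)^\eps q^2/N$ with the right normalization; this is where the holomorphy of $f$ and the level-$1$ hypothesis are used (no continuous spectrum, classical $J$-Bessel kernel with good decay). Everything else — opening the Kloosterman sum, swapping the order of summation, invoking Weil for the resulting complete exponential sums, and the negligible-tail truncation — is routine and parallels the treatment of Proposition~\ref{propSZ} in \S\ref{par2}. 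I would also note that, unlike Proposition~\ref{propSZ}, here there is genuine cancellation from $\lf$ being an honest cusp-form coefficient rather than a divisor function, but we do not need to exploit it: the bound follows purely from $|\lf(n)|\le d(n)$ together with the structural $q$-aspect input from Voronoi and Weil.
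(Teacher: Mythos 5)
Your strategy is at heart the same as the paper's: expand the Kloosterman sum into additive characters, apply the classical level-$1$ Voronoi formula to the resulting additively twisted sums, recombine the character sum, and use the rapid decay of the Hankel transform (valid since $Q=1$) to truncate the dual sum at $m\ll (qN)^{\eps}q^2/N$. The paper packages the first two steps as Proposition~\ref{prvoronoi} and the recombination as Lemma~\ref{immediate}. However, two concrete steps in your write-up fail as stated. First, the opening of the Kloosterman sum is wrong: as displayed, $e_q(h\bar a\bar n+h)$ is linear in $h$, so the sum over $h$ is a Ramanujan sum, not $\Kl(an;q)$; and the repaired version implicit in your next display, $q^{-1/2}\sum_{h}e_q(h+\overline{ahn})$, equals $\Kl(\overline{an};q)$, which is a different quantity. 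This is not cosmetic. The correct opening $\Kl(an;q)=q^{-1/2}\sum_{h}e_q(\bar h+anh)$ produces the plain additive twist $e_q(ahn)$ in the $n$-variable, to which the classical Voronoi formula applies directly, and the recombined $h$-sum $q^{-1/2}\sum_{h\neq 0}e_q(\bar h(1-\bar a m))$ is a Ramanujan sum that collapses to (essentially) the indicator of $m\equiv a\pmod q$; this collapse is the entire source of the saving. With your inverted twist $e_q(b\bar n)$ the dual arithmetic weight is a hyper-Kloosterman-type object of typical size $O(1)$ spread over \emph{all} $m\ll q^2/N$, and the resulting estimate $\frac{N}{q}\cdot\frac{q^2}{N}=q$ is worse than trivial for $N\leq q$.

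Second, your final bookkeeping does not close: the product you display, $\frac{1}{\sqrt q}\cdot\frac{q^2}{N}\cdot q^{1/2}\cdot\frac{N}{q}$, equals $q$, not $q^{1/2}$. The correct accounting is that the recombined $h$-sum equals $q-1$ on the single residue class $m\equiv a\pmod q$ --- which contains only $O(1+q/N)$ integers $m\ll (qN)^{\eps}q^2/N$ --- and equals $-1$ elsewhere; combined with $|\lf(m)|\leq d(m)\ll_{\eps} m^{\eps}$, the on-class contribution is $\ll (qN)^{\eps}\frac{N}{q^{1/2}}(1+\frac{q}{N})$ and the off-class contribution is $\ll (qN)^{\eps}\frac{N}{q^{3/2}}\cdot\frac{q^2}{N}$, both of which are $O((qN)^{\eps}(q^{1/2}+N/q^{1/2}))$. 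Both slips are repairable within your framework, but as written the argument does not establish the stated bound.
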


\begin{remark}
  This is by no means the most general statement that may be proved
  along these lines.
\end{remark}

As pointed out in \cite{SZ}, the estimates \eqref{SZbound1} and
\eqref{SZboundsmooth} are significant improvements of the bound
\begin{equation}\label{FKMbound1} 
  \sumsum_{m,n}W_1\Bigl(\frac{m}M\Bigr)W_2\Bigl(\frac{n}N\Bigr)\Kl(amn;q)\ll_{\eps, Q}  
  q^{\eps}MN\Bigl(1+\frac{q}{MN}\Bigr)^{1/2}q^{-1/8},	
\end{equation}
and likewise the estimate \eqref{SZcusp} improves significantly over
\begin{equation}\label{FKMbound2} \sum_{n\geq 1}\lf(n)\Kl(an;q)W\Bigl(\frac{n}N\Bigr)\ll_{\eps,Q}  
  q^{\eps}N\Bigl(1+\frac{q}{N}\Bigr)^{1/2}q^{-1/8},
\end{equation}
both of which were obtained by Fouvry, Kowalski and Michel as special
cases of \cite[Thm.~1.16]{FKM2} and \cite[Thm.~1.2]{FKM1}. 

\subsection{Applications}\label{appl} The bounds \eqref{FKMbound1} and
\eqref{FKMbound2} 
have been applied recently in a number of problems, and the bounds
\eqref{SZbound1} and \eqref{SZboundsmooth} lead to further
improvements. The main source for these improvements is the new input of Proposition \ref{propSZ}, but a bit of extra work is necessary. 

As a first application, we can improve our work on the error term for
the fourth moment of Dirichlet series $L(s, \chi)$ of characters
$\chi$ to a prime modulus $q$ (\cite[Theorem 1.1]{445}).
\begin{theorem}\label{472}
  There exists a polynomial $P_4\in \Rr[X]$ of degree $4$, such that
\[
\frac{1}{q-1}\sum_{\chi\mods q}|L(\chi,1/2)|^4=P_4(\log
q)+O(q^{-\bexp+\eps})
\]
for all primes $q$, where the implied constant depends only on
$\eps>0$.  If the Ramanujan--Petersson conjecture holds for Fourier
coefficients of Hecke--Maa{\ss} forms of level $1$, then the exponent $\bexp$
can be replaced by $1/16$.
\end{theorem}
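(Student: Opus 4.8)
The plan is to revisit the proof of \cite[Theorem 1.1]{445}, locate the place where the bound \eqref{FKMbound1} (equivalently \cite[Thm.~1.16]{FKM2}) was invoked, and replace it with the stronger bound \eqref{SZboundsmooth} of Proposition~\ref{propSZ}. Recall that the starting point is an approximate functional equation for $|L(\chi,1/2)|^4$, which after averaging over $\chi\mods q$ and using orthogonality of characters produces a main term $P_4(\log q)$ together with an off-diagonal contribution governed by a shifted convolution sum of the shape
\[
\sumsum_{m_1 m_2\equiv a\, n_1 n_2\mods q} d(m_1)d(m_2)d(n_1)d(n_2)\, V\Bigl(\frac{m_1 m_2}{q}\Bigr)V\Bigl(\frac{n_1 n_2}{q}\Bigr),
\]
where $V$ is a suitable smooth cutoff and the variables run in dyadic ranges $M_1,M_2,N_1,N_2$ with $M_1M_2,N_1N_2$ of size roughly $q$. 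First I would open up the divisor functions and split into dyadic boxes; the diagonal $m_1m_2=n_1n_2$ feeds the polynomial $P_4$, and on the off-diagonal, one detects the congruence $m_1m_2\equiv a n_1n_2\mods q$ with additive characters and recognizes a complete Kloosterman sum $\Kl(a\,\overline{n_1n_2}\,m_1m_2;q)$ after Fourier completion in one pair of variables. This is exactly the configuration to which Proposition~\ref{propSZ} applies, with $Q=q^\varepsilon$: the smooth bilinear form in the two variables $m=m_1m_2$ and $n=n_1n_2$ (each itself weighted by $d$, which after dyadic decomposition is a product of smooth weights in the four variables) is bounded by $(qQ)^\varepsilon Q^2\bigl(q^{1/2}+MN/q^{1/2}\bigr)\ll q^{\varepsilon}\bigl(q^{1/2}+MN/q^{1/2}\bigr)$, and since $MN\asymp M_1M_2N_1N_2\asymp q^2$ the second term dominates and gives $q^{3/2+\varepsilon}$; comparing with the normalization (the trivial size of the fourth moment average is $q^{1+\varepsilon}$, and the off-diagonal carries an overall factor $q^{-2}$ from the length of the sums) yields a saving of $q^{-1/2}$ over trivial, hence an error term of size $q^{-\bexp+\varepsilon}$ with $\bexp=1/20$ once all the dyadic sums and the auxiliary estimates for the remaining ranges (where completion or the Weil bound alone suffices) are reassembled.

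The main obstacle, and the place where "a bit of extra work is necessary" as the introduction warns, is handling the ranges where the dyadic parameters are unbalanced — specifically when one of $M_1,M_2,N_1,N_2$ is very small or very close to $q$, so that Proposition~\ref{propSZ} is either not the efficient tool or the variable $m=m_1m_2$ fails to range over a genuinely two-variable box. In the extreme small ranges one instead estimates trivially or uses the classical completion method (the Pólya--Vinogradov-type bound, which as noted after Theorem~\ref{thmSZ} is more efficient when $M$ or $N$ is close to $q$), and one must check that these contributions are also $O(q^{-\bexp+\varepsilon})$; this is a finite bookkeeping task but it is where the precise value $\bexp=1/20$ gets pinned down. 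The conditionality statement is then immediate: if one runs the same argument with the divisor functions $d(n_i)$ replaced, in the off-diagonal analysis, by Hecke eigenvalues of Maa{\ss} forms (which appear when one uses a spectral, rather than elementary, treatment of the shifted convolution, or equivalently when one tracks the Eisenstein versus cuspidal contributions), the only input that changes is the Ramanujan--Petersson bound $|\lambda(n)|\le d(n)$ — assuming it unconditionally in place of the Kim--Sarnak bound removes the corresponding loss and improves $\bexp$ to $1/16$. I expect the bulk of the writing to consist of (i) quoting the reduction from \cite{445} to the off-diagonal shifted convolution sum, (ii) the dyadic decomposition and the Fourier completion that exhibits the Kloosterman sum, (iii) the application of \eqref{SZboundsmooth}, and (iv) the case analysis for the boundary ranges; step (iv) is the only genuinely delicate one.
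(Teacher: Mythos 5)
Your top-level idea---rerun the argument of \cite[\S 6.3]{445} with Proposition~\ref{propSZ} in place of \eqref{FKMbound1}---is exactly the paper's strategy, but the proposal as written has two genuine gaps. First, the deployment of Proposition~\ref{propSZ} is misdescribed. The paper does not apply the bilinear bound to $m=m_1m_2$ versus $n=n_1n_2$ with $MN\asymp q^2$ (a range where, as noted after Theorem~\ref{thmSZ}, completion is already efficient and where your accounting ``saving $q^{-1/2}$ hence $\bexp=1/20$'' does not follow). Instead, one first restricts to $\mu+\nu^*\leq 1+4\eta$ using the \emph{spectral} bound \cite[(3.18)]{445} (this input is essential to the unconditional result, not only to the conditional one), then applies Voronoi to produce a quadrilinear sum $\Kl(\pm m_1m_2m_3m_4;q)$ over four variables of \emph{total} length $M_1M_2M_3M_4=q^{\mu+\nu'}\approx q$, and only then applies \eqref{SZboundsmooth} to the two largest variables $m_3,m_4$. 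This succeeds unless $\mu_3+\mu_4-(\mu_1+\mu_2)\leq 2\eta$, i.e.\ unless the four variables are nearly balanced.

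Second, and more importantly, the nearly balanced range is not ``finite bookkeeping'' handled by trivial estimation or P\'olya--Vinogradov: it requires a second, different tool, namely the type~II bilinear bound with arbitrary coefficients (Proposition~\ref{proptypeII}, i.e.\ \cite[Thm.~1.17]{FKM2}) applied with one block $M_4$ against $M_1M_2M_3$. The exponent $1/20$ is then pinned down by optimizing the three resulting constraints ($\mu+\nu^*\leq 1+4\eta$ from the spectral bound, $\mu_3+\mu_4-(\mu_1+\mu_2)\leq 2\eta$ from \eqref{SZboundsmooth}, and the two terms $q^{(\mu_1+\mu_2+\mu_3-1)/2}$, $q^{-1/4+\mu_4/2}$ from the type~II bound), which yield $\mu_1+\mu_2+\mu_3\leq 3/4+3\eta$ and $\mu_4\leq 1/4+\tfrac52\eta$ and hence $\eta=1/20$; your proposal contains no mechanism that produces this number. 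Your account of the conditional improvement is directionally right (Ramanujan--Petersson enters through the spectral estimate for the shifted convolution), but the precise effect is that the constraint becomes $\mu+\nu^*\leq 1+2\eta$, and rerunning the same optimization gives $\eta=1/16$.
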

\begin{remark} In \cite[Theorem 1.1]{445}, the exponents were
  respectively $1/32$ (unconditionally) and $1/24$ (assuming the
  Ramanujan--Petersson conjecture). The first breakthrough in this
  respect is due to M.\ Young \cite{MY} who obtained an asymptotic
  formula with exponents $5/512$ (resp.\ $1/80$).
\end{remark}

\begin{remark}
  The proof of Theorem  \ref{472} follows the same lines as \cite[\S
  6.3]{445}, except that instead of the bound \cite [(5.5)]{445}
  (i.e.\ \eqref{FKMbound1} above) we use Proposition \ref{propSZ}. It
  is of some interest to record here in outline how this improved
  exponent arises. The problem of the fourth moment leads to
  evaluating non-trivially the shifted convolution type sum
\begin{equation}\label{eqquadrismooth}
\sumsum_\stacksum{m\asymp M,n\asymp N}{m\equiv n\mods q}d(m)d(n)\approx
\sumsum_\stacksum{m_1,m_2,n_1,n_2}{m_1m_2\equiv n_1n_2\mods q}1	
\end{equation}
with $d$ the usual divisor function and with
$MN=M_1M_2N_1N_2\approx q^2$. The spectral theory of automorphic forms
provides a good error term when $M$ and $N$ are relatively
close in the logarithmic scale. Otherwise, assuming that $N=N_1N_2\geq M=M_1M_2$, we apply the
Poisson summation formula to both variables $n_1$ and $n_2$
(equivalently, the Voronoi summation formula applied to the variable
$n=n_1n_2$), getting two variables of dual size $n_1^*\sim q/N_1$ and
$n_2^*\sim q/N_2$ and a smooth quadrilinear sum of Kloosterman sums
\[ \sumsum_{m_1,m_2,n^*_1,n^*_2}\Kl(m_1m_2n_1^*n_2^*;q), \]
which is evaluated by various means, in particular using the smooth
bilinear sum bound \eqref{SZboundsmooth}.  In our specific case, the
bound \eqref{SZboundsmooth} amounts to applying the Poisson formula to
two of the four variables $m_1,m_2,n^*_1,n^*_2$. This leads back to a
sum of the type \eqref{eqquadrismooth}, which is then bounded
trivially. This argument is not circular, and allows for an
improvement, because we (implicitly) apply the process to variables
different from the ones we started from (for instance to $m_1$ and
$n_1^*$ instead of $n_1^*$ and $n_2^*$).
\end{remark}

Our second application is an improvement of the first bound in
\cite[Cor.~1.13] {FKM2} for Klooster\-man sums over primes in short
intervals:

\begin{theorem}\label{Klpq} Let $q$ be a prime number. 
  Let $Q\geq 1$ be a parameter and let $W$ be a function
  satisfying~\eqref{Wbound}. Then for every $X$ such that
  $2\leq X\leq q$ and every $\varepsilon >0$, we have
  \begin{equation}\label{evening2}
    \sum_{p\text{ prime }} 
    W \Bigl( \frac{p}{X}\Bigr) \Kl (p;q) \ll_{\varepsilon} 
    q^{1/4+\varepsilon} Q^{1/2} X^{2/3}.
 \end{equation}
 \par
 In addition, for every prime $q$, every $X$ such that $2\leq X\leq q$
 and every $\varepsilon >0$, we have
 \begin{equation}\label{evening1}
 \sum_\stacksum{p\leq X}{p\text{ prime}}\Kl(p;q)\ll_{\eps} q^{1/6+\eps}\, X^{7/9}.
 \end{equation}
 In both cases, the implicit constant depends only on $\varepsilon$.
\end{theorem}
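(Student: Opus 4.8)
The plan is to prove both bounds in Theorem~\ref{Klpq} by combining the smooth bilinear estimate of Proposition~\ref{propSZ} with a combinatorial decomposition of the indicator function of the primes, in the style of Vinogradov or Vaughan. First I would recall that, after inserting the smoothing $W(p/X)$, one writes $\Lambda(n) = \Lambda^\sharp(n) + \Lambda^\flat(n)$ via Vaughan's identity (or Heath-Brown's identity), splitting the sum $\sum_n \Lambda(n) W(n/X)\Kl(n;q)$ into a bounded number of ``type I'' sums $\sum_{d}\sum_{m} \alpha_d W(dm/X)\Kl(dm;q)$ with $d$ ranging up to some parameter and the inner variable smooth, and ``type II'' (bilinear) sums $\sum_m\sum_n \alpha_m\beta_n W(mn/X)\Kl(mn;q)$ with both $m,n$ in dyadic ranges $M,N$ with $MN\asymp X$ and $M,N$ bounded away from $1$ and $X$ by the Vaughan parameters. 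Removing the coefficients $\alpha,\beta$ from being genuinely arbitrary is handled by the Cauchy--Schwarz step that is already packaged into $\eqref{SZboundsmooth1}$ through the appearance of $W_3(mn/Y)$ — more precisely, for type II sums I would apply Cauchy--Schwarz in one variable and open the square, then invoke $\eqref{SZboundsmooth}$ for the resulting diagonal-free contribution; alternatively one applies Proposition~\ref{propSZ} directly after a smooth dyadic decomposition, since the Kloosterman sum $\Kl(amn;q)$ there already allows arbitrary (smooth) weights in each variable up to the $Q^2$ loss.

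Next I would separate the two regimes. For the type I sums, completion (Poisson summation modulo $q$ in the smooth variable $m$, i.e. the classical ``completing the sum'' technique together with the Weil bound for the complete Kloosterman sums that arise) gives a bound of the shape $\sum_d |\alpha_d| (q^{1/2+\eps} + \text{main term})$; the main term is controlled because $d$ is small. For the type II sums, Proposition~\ref{propSZ} gives $(qQ)^\eps Q^2 (q^{1/2} + MN/q^{1/2}) = (qQ)^\eps Q^2(q^{1/2} + X/q^{1/2})$, uniformly in the dyadic split. Then I would optimize the Vaughan parameters against $X$ and $q$: the type I contribution grows with the cutoff, the type II contribution is essentially parameter-free once $M,N$ are both bounded away from the endpoints, and the third piece (the $\alpha_m\beta_n$ with one variable very short, handled as a type I′ sum) is balanced by completion as well. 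Carrying out this optimization is what produces the exponents $q^{1/4+\eps}Q^{1/2}X^{2/3}$ and, in the unsmoothed/short-interval form after a further trivial or Perron-type passage, $q^{1/6+\eps}X^{7/9}$. For $\eqref{evening1}$ one additionally drops the smooth weight by a standard comparison (splitting $[2,X]$ into $O(\log)$ dyadic blocks and using a $C^\infty$ majorant/minorant), at the cost of only $\eps$ in the exponent and with $Q$ replaced by a fixed power; the slightly worse exponents $1/6$ and $7/9$ relative to $1/4$ and $2/3$ reflect the loss from the sharp cutoff and from having to take $Q$ of size a small power of $X$.

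The main obstacle I expect is getting the bookkeeping in the combinatorial identity to interact cleanly with the $Q$-dependence and the dyadic ranges: one must ensure that in every bilinear piece both variables $M,N$ satisfy $1\ll M,N\ll X$ with enough room so that the term $MN/q^{1/2}=X/q^{1/2}$ genuinely dominates the trivial bound $MN=X$ exactly when $X$ is in the stated range $X\le q$, and that the type I pieces never force the completion parameter so high that the $q^{1/2}$ term in $\eqref{SZbound1}$-type estimates overwhelms the gain. A secondary technical point is that Proposition~\ref{propSZ} is stated for $\Kl(amn;q)$ with a genuinely multiplicative argument, whereas Vaughan's identity produces $\Kl(mn;q)$ directly with $a=1$ — this is harmless — but one must also handle the ranges where $m$ or $n$ exceeds $q$, which is where one reverts to pure completion (as the excerpt already notes, the restriction $M,N\le q$ is not serious). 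Once these ranges are correctly partitioned, the optimization is a routine, if slightly delicate, calculus exercise, and the final exponents fall out as stated.
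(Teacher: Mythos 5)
Your high-level plan (a combinatorial identity for $\Lambda$, bilinear estimates for the resulting pieces, and a smooth majorant to deduce \eqref{evening1} from \eqref{evening2}) matches the paper's, and your treatment of \eqref{evening1} --- choosing a majorant with $Q=\Delta^{-1}$ and balancing $\Delta X$ against $q^{1/4+\eps}Q^{1/2}X^{2/3}$, which gives $\Delta=q^{1/6}X^{-2/9}$ --- is exactly what is done. However, the argument for \eqref{evening2} has two genuine gaps, located precisely where the exponent $2/3$ is decided. First, you assign Proposition~\ref{propSZ} to the type~II sums. That proposition only bounds \emph{smooth} bilinear forms $\sum_m\sum_n W_1(m/M)W_2(n/N)\Kl(amn;q)$; it does not apply to $\sum_m\sum_n\alpha_m\beta_n\Kl(mn;q)$ with arbitrary bounded coefficients, and the weight $W_3(mn/Y)$ in \eqref{SZboundsmooth1} is just an extra smooth factor, not a substitute for Cauchy--Schwarz. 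Your fallback --- Cauchy--Schwarz in one variable, open the square, then ``invoke \eqref{SZboundsmooth}'' --- also fails: opening the square produces correlation sums $\sum_m \Kl(mn_1;q)\overline{\Kl(mn_2;q)}$ of \emph{products} of two Kloosterman sums, which are not of the form covered by \eqref{SZboundsmooth}. The correct tool is Proposition~\ref{proptypeII} (i.e.\ \cite[Thm.~1.17]{FKM2}), whose proof rests on Weil--Deligne bounds for exactly those correlation sums; note that the factor $Q^{1/2}$ and the term $q^{1/4}X^{2/3}$ in \eqref{evening2} come from that proposition, not from Proposition~\ref{propSZ}, whose $Q^2$ would be too lossy. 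Second, for the pieces with long smooth variables you propose single-variable completion, which gives about $(X/N_1)q^{1/2}$; in the worst configuration (all rough variables tiny and $\nu_1=\nu_2=5x/12$ in the paper's notation) this is $q^{1/2}X^{7/12}$, which exceeds $q^{1/4}X^{2/3}$ for every $X\leq q$. The paper's key new step is to apply the \emph{two-variable} smooth bound \eqref{SZboundsmooth1} jointly to the two longest smooth variables $n_1,n_2$, obtaining $q^{1/2}Q^2X/(N_1N_2)$, which is admissible once $N_1N_2\geq X^{5/6}$; this is where Proposition~\ref{propSZ} actually enters the proof, in effect with the roles of your two tools interchanged.

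The combinatorial step you defer to ``optimizing the Vaughan parameters'' is also not routine, and with Vaughan's identity it cannot be completed: Proposition~\ref{proptypeII} is adequate only when the shorter side $M$ of the bipartition satisfies roughly $q^{-1/2}X^{2/3}\leq M\leq X^{1/3}$, whereas Vaughan's type~II range lets $M$ run up to $X^{1/2}$, and for $M\in(X^{1/3},X^{1/2}]$ neither bound at hand suffices. The paper uses Heath-Brown's identity with $J=10$ (hence $20$ variables) together with the dichotomy: either some subsum of the exponent vector lies in $[x/6-\eps,x/3+\eps]$, in which case Proposition~\ref{proptypeII} applied to that bipartition gives the saving $q^{-(x/3-1/4-\kappa/2)}$ and hence the main term $q^{1/4}Q^{1/2}X^{2/3}$; or no subsum lies there, which forces all the $\mu_i$ to be small and $\nu_1+\nu_2\geq 5x/6$, so that \eqref{SZboundsmooth1} applies to $n_1n_2$ and yields the (ultimately dominated) term $q^{1/2}Q^2X^{1/6}$. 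Without this case analysis, or an equivalent one, the exponent $2/3$ does not fall out of the estimates you list.
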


 \begin{remark} The range where these bounds are non-trivial is the
   same as that in~\cite[Cor.~1.13]{FKM2}, namely the length of
   summation $X$ should be greater than $q^{3/4 +\varepsilon}$ if $Q$
   is fixed. The improvement therefore lies in the greater
   cancellation in this allowed range. For instance, when $X=q$, we
   gain a factor $q^{1/18 -\varepsilon}$ over the trivial bound for
   the sum appearing in \eqref{evening1} instead of
   $q^{1/48 -\varepsilon}$ in \cite [Corollary 1.13]{FKM2}.
  \end{remark}

\subsection*{Acknowledgement.} We would like to thank the referee for very useful suggestions that improved the presentation of the paper. 

\section{Correlation sums of Kloosterman sums and divisor-like
  functions}\label{par2}

In this section, we revisit Theorem \ref{thmSZ} and establish Proposition \ref{propSZ}.
The idea behind the proof of Theorem \ref{thmSZ} is that after applying the completion method twice over the $m$ and $n$ variables, the Kloosterman sum $\Kl(amn;q)$ is transformed into the Dirac type function
$q^{1/2}\delta_{mn\equiv a\mods q}$, and taking the congruence condition into account one saves (in the most favourable situation) a factor $q^{1/2}/q=q^{-1/2}$ over the trivial bound.

In our smoothed setting, the completion method is replaced by two
applications of the Poisson summation formula or more precisely by a
single application of the \emph{tempered Voronoi summation formula} of
Deshouillers and Iwaniec, in the form established in
\cite[Prop.~2.2]{FKMd3}.

Let $q$ be a prime number, and let $K\colon\Zz\to \Cc$ be a
$q$-periodic function.  The \emph{normalized Fourier transform} of $K$
is the $q$-periodic function on $\Zz$ defined by
\begin{equation*} 
  \fourier{K}(h) = \frac{1}{\sqrt{q}}\sum_{n\bmod q} K(n) e_q(
  {hn})
\end{equation*}
and the \emph{Voronoi transform} of $K$ is the $q$-periodic function
on $\Zz$ defined by
\[
\bessel{K}(n) = \frac{1}{\sqrt{q}}\sum_{\substack{h\bmod q\\(h,q) =1}}
\fourier{K}(h) e_q ({\overline h n} ).
\]

\begin{proposition}[Tempered Voronoi formula modulo
  primes]\label{Voronoigeneral0} 
  Let $q$ be a prime number, let $K\, :\ \Zz \longrightarrow \Cc$ be a
  $q$-periodic function, and let $G$ be a smooth function on $\Rr^2$
  with compact support and Fourier transform denoted by $\what G$. We have
\begin{equation}\label{voronoi} 
\sumsum_{m,n\in\Zz}  K(mn) G(m,n)=\frac{\fourier{K}(0)}{\sqrt{q}}
\ \sumsum_{m,n\in\Zz}  G(m,n)+
\frac{1}{q}\ \sumsum_{m,n\in\Zz} \bessel{K}(mn)\fourier{G}\Bigl(\frac{m}q,\frac
nq\Bigr).
\end{equation}
\end{proposition}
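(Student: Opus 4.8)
The plan is to reduce the two-dimensional identity to iterated one-dimensional Poisson summation, using the multiplicativity of the argument $mn$ only through the Fourier transform $\fourier{K}$. First I would record that for any $q$-periodic function $K$ on $\Zz$ one has the exact expansion $K(r)=q^{-1/2}\sum_{h\bmod q}\fourier{K}(h)e_q(-hr)$, valid for every integer $r$; separating the term $h\equiv 0\mods q$ gives
\[
K(r)=\frac{\fourier{K}(0)}{\sqrt q}+\frac{1}{\sqrt q}\sum_{\substack{h\bmod q\\(h,q)=1}}\fourier{K}(h)e_q(-hr).
\]
Applying this with $r=mn$ and inserting it into $\sumsum_{m,n}K(mn)G(m,n)$, the first term immediately produces the first summand on the right of~\eqref{voronoi}. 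It remains to treat, for each $h$ coprime to $q$, the sum $\sumsum_{m,n}e_q(-hmn)G(m,n)$.

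For a fixed such $h$, I would apply the ordinary Poisson summation formula in each of the two variables separately. Poisson summation in $m$ along the arithmetic progressions modulo $q$ turns $\sum_{m}e_q(-hmn)G(m,n)$ into $q^{-1}\sum_{k}\sum_{m\equiv \ast}\cdots$; carrying this out in both variables and collecting the exponential factors, the condition is that $k\equiv \overline{h}\,\ell$-type relations appear, and one is left with
\[
\sumsum_{m,n}e_q(-hmn)G(m,n)=\frac{1}{q}\sumsum_{k,\ell}\,e_q(\overline h\,k\ell)\,\what G\Bigl(\frac{k}{q},\frac{\ell}{q}\Bigr),
\]
which is just the classical statement that the finite Fourier transform of the character $e_q(-h\,\cdot\,)$ of the hyperbola, evaluated against a smooth weight, is again (up to normalization and the inversion $h\mapsto\overline h$) supported on the dual hyperbola. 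Here the smoothness and compact support of $G$ guarantee absolute convergence of all sums and legitimacy of interchanging them, and $\what G$ denotes the Euclidean Fourier transform on $\Rr^2$ as in the statement.

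Finally I would substitute this back:
\[
\sumsum_{m,n}K(mn)G(m,n)=\frac{\fourier{K}(0)}{\sqrt q}\sumsum_{m,n}G(m,n)+\frac{1}{q^{3/2}}\sum_{\substack{h\bmod q\\(h,q)=1}}\fourier{K}(h)\sumsum_{k,\ell}e_q(\overline h k\ell)\what G\Bigl(\frac kq,\frac\ell q\Bigr),
\]
and then interchange the $h$-sum with the $(k,\ell)$-sum. Replacing $h$ by $\overline h$ in the inner sum (a bijection on $(\Zz/q\Zz)^\times$) converts $\sum_h \fourier{K}(h)e_q(\overline h k\ell)$ into $\sum_h \fourier{K}(\overline h)e_q(h k\ell)=\sqrt q\,\bessel{K}(k\ell)$ by the very definition of the Voronoi transform $\bessel{K}$, since $\fourier K(h)=\fourier K(\overline{\overline h})$ and the defining sum for $\bessel K$ runs over units. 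This produces exactly the second summand on the right of~\eqref{voronoi}, completing the proof. The only genuinely delicate point is bookkeeping: keeping track of the inversions $h\mapsto\overline h$ and of the powers of $q^{1/2}$ so that the normalizations in $\fourier{K}$, $\bessel{K}$ and $\what G$ match the claimed identity; everything else is absolutely convergent by the rapid decay of $\what G$ and routine.
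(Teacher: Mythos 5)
Your proposal is correct and takes essentially the same route as the source the paper cites for this statement (it gives no proof of its own, deferring to \cite{FKMd3}): expand $K(mn)$ into additive characters via finite Fourier inversion, apply two-dimensional Poisson summation modulo $q$, and evaluate the complete hyperbola sum $\sum_{a,b \bmod q}e_q(-hab+ka+\ell b)=q\,e_q(\overline{h}k\ell)$, which is exactly where your ``$k\equiv\overline{h}\ell$-type relation'' comes from; all powers of $q^{1/2}$ check out. The only cosmetic remark is that your final change of variables $h\mapsto\overline{h}$ is unnecessary, since $\sum_{(h,q)=1}\fourier{K}(h)e_q(\overline{h}k\ell)$ is already equal to $\sqrt{q}\,\bessel{K}(k\ell)$ by the definition of the Voronoi transform.
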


The key point is that when $K$ is a (multiplicatively shifted)
Kloosterman sum, then $\widecheck{G}$ is a normalized delta-function:

\begin{lemma}\label{immediate} For $(a,q)=1$ and $K(n)=\Kl(an;q)$ one has
\[
\what K(h)=\begin{cases}0&\text{if $q\mid h$},\\
e_q(-a\ov h)	&\text{if $q\nmid h$,}
\end{cases}
\]
and
\[
\bessel{K}(n)=
\begin{cases}\displaystyle{\frac{q-1}{q^{1/2}}}&\text{if $n\equiv a \bmod q$},\\
  \displaystyle{-\frac{1}{q^{1/2}}}&\text{otherwise}.
\end{cases}
\]
\end{lemma}

This lemma is proved by an immediate computation.  We now begin with
the proof of \eqref{SZboundsmooth}. Let $q$ be a prime and let $W$ be
a function satisfying~(\ref{Wbound}). By integration by parts, we then
have
\[
\what W (t) \ll_{j,\varepsilon} \min \bigl( 1, q^{j\varepsilon}\vert
t/Q\vert^{-j}\bigr)
\]
for $t\in \Rr$ and for any integer $j\geq 0$ and $\varepsilon >0$,
where the implied constant depends only on $j$ and $\eps$.

Defining $G(m,n)=W_1(m/M) W_2(n/N)$, we deduce that for any $A$ and
any $\eps>0$, we have
\begin{equation}\label{G}
  \what G\Bigl(\frac{m}q,\frac{n}q\Bigr)=M\what W_1\Bigl(\frac{mM}q\Bigr)N\what W_2\Bigl(\frac{nN}q\Bigr)\ll_{\eps,A} q^\eps MN\Bigl(1+\frac{\vert m\vert M}{qQ}\Bigr)^{-A}\Bigl(1+\frac{\vert n\vert N}{qQ}\Bigr)^{-A}.
\end{equation}
We next apply the Voronoi formula, Proposition~\ref{Voronoigeneral0}, with $K(n)=\Kl(an;q)$ to the left-hand side of
\eqref{SZboundsmooth}.  The first term on the right-hand side of
\eqref{voronoi} vanishes since $\fourier{K}(0) = 0$. By Lemma \ref{immediate}
and \eqref{G}, the contribution of $mn \not\equiv a$ (mod $q$) in the
second term is  of order at most
\begin{align*}
  \frac{MN}{q^{3/2-\varepsilon}} \sum_{m, n\in\Zz} \Bigl(1+\frac{\vert m\vert M}{qQ}\Bigr)^{-2}\Bigl(1+\frac{\vert n\vert N}{qQ}\Bigr)^{-2}& \ll \frac{MN}{q^{3/2-\varepsilon}}\Bigl(1 + \frac{qQ}{M}\Bigr) \Bigl(1 + \frac{qQ}{N}\Bigr) \\
&\ll q^{\varepsilon}\Bigl(\frac{MN}{q^{3/2}} + \frac{(M+N)Q}{q^{1/2}} + q^{1/2}Q^2\Bigr).
\end{align*}

Similarly, the remaining terms $mn \equiv a$ (mod $q$) are, up to a constant,  bounded by 
\[  q^\eps\frac{MN}{q^{1/2}} \sum_{n \equiv a\, (\text{mod }q)} d(n) \Bigl(1 + \frac{n MN}{q^2Q^2}\Bigr)^{-2} \ll  (q^2Q)^{\varepsilon} \left(\frac{MN}{q^{1/2}} +  Q^2 q^{1/2}\right). \]
This completes the proof of  \eqref{SZboundsmooth}.

Next, we prove \eqref{SZboundsmooth1}. We may suppose that
\begin{equation*}
MN/8 <Y <8 MN,
\end{equation*}
since otherwise the sum of interest is empty. Then we see that for
$M/2 < x<2M$ and $N/2<y<2N$, we have the inequalities

\[
\frac{\partial^{i+j} W_3(xy/Y)}{\partial x^i \, \partial y^j}  \ll_{\varepsilon, i, j} (q^\varepsilon Q)^{i+j} M^{-i}N^{-j}
\]
for all non-negative integers $i, j$. Hence the function
$ G(x,y) = W_1(x/M) W_2(y/N)
W_3(xy/Y)$ 
satisfies the inequalities
\[
\frac{\partial^{i+j} G(x,y)}{\partial x^i \partial y^j}\ll_{\varepsilon,i,j} (q^\varepsilon Q)^{i+j} x^{-i} y^{-j},
\]
for $x$, $y >0$, $\varepsilon >0$ and integers $i, j \geq
0$.  By repeated integration by parts of the definition of the Fourier
transform
\[
\what G (u,v)= \int_{-\infty}^{\infty} \int_{-\infty}^\infty G(x,y) e( -ux-vy) dx\, dy,
\]
we obtain the bound
\[
\what G\Bigl( \frac{m}{q}, \frac{n}{q}\Bigr) \ll_{\varepsilon, A} q^\varepsilon MN \Bigl( 1 +\frac{\vert m\vert M}{qQ}\Bigr)^{-A}
 \Bigl( 1 +\frac{\vert n\vert N}{qQ}\Bigr)^{-A} 
\]
for any $A$ and any $\varepsilon >
0$, analogously to \eqref{G}.  The end of the proof of
\eqref{SZboundsmooth1} is now similar to \eqref{SZboundsmooth}.

For future reference we record the following bound for type II sums of
Kloosterman sums 
\cite[Thm.~1.17]{FKM2}.

\begin{proposition}\label{proptypeII} Let $q$ be a prime number. 
  Let $1\leq M,N\leq q$ and $(\alpha_m)$, $(\beta_n)$ be sequences of
  complex numbers supported in $[M,2M]$ and $[N,2N]$ respectively. Let
  either $Q=1$ and $W$ be the constant function $1$, or $Q\geq 1$ and $W$
 be a function satisfying~\eqref{Wbound}. Then, for every
  $\varepsilon >0$, we have
\[
\sumsum_{m,n}\alpha_m \beta_n \Kl (mn;q)W
\Bigl(\frac{mn}{Y}\Bigr)\ll_\varepsilon \Vert \mathbf \alpha\Vert_2 \,
\Vert \mathbf \beta \Vert_2\, (MN)^{1/2}\Bigl( \frac1M+ Q\frac{q^{1/2+
    \varepsilon}}{N} \Bigr)^{1/2}.
\]
\end{proposition}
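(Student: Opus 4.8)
The plan is to run the classical Cauchy--Schwarz-and-completion argument, whose arithmetic heart is the square-root cancellation in complete correlation sums of Kloosterman sums; full details are in \cite[Thm.~1.17]{FKM2}. Writing $S$ for the sum to be bounded, I would first apply Cauchy--Schwarz in the $n$-aspect to remove $(\beta_n)$, keeping the $m$-sum inside and majorizing the restriction $n\in[N,2N]$ by a fixed smooth $U\geq\charfun_{[1,2]}$ with $\mathrm{Supp}(U)\subset[1/2,4]$, so that
\[
|S|^2\leq\Vert\mathbf\beta\Vert_2^2\sumsum_{m_1,m_2}\alpha_{m_1}\overline{\alpha_{m_2}}\sum_{n}U\Bigl(\frac{n}{N}\Bigr)\Kl(m_1n;q)\overline{\Kl(m_2n;q)}W\Bigl(\frac{m_1n}{Y}\Bigr)\overline{W\Bigl(\frac{m_2n}{Y}\Bigr)}.
\]
Here I would record that $\Kl(\cdot;q)$ depends only on its argument modulo $q$, that in the presence of $W$ one may assume $Y\asymp MN$ (otherwise $S=0$), and that for fixed $m_1,m_2$ the resulting weight on $n$ is smooth, supported on $n\asymp N$, with $j$-th derivative $\ll(q^\varepsilon Q/N)^j$ uniformly, the powers of $Q$ arising from the derivatives of $W$; in the case $Q=1$, $W\equiv1$ the weight is simply $U(n/N)$.

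Next I would split the $(m_1,m_2)$-sum according to whether $m_1\equiv m_2\pmod q$. For the diagonal, $\Kl(m_1n;q)\overline{\Kl(m_2n;q)}=\lvert\Kl(m_1n;q)\rvert^2\ll1$ by the Weil bound, so the inner $n$-sum is $\ll N$; as $M\leq q$ each residue class contains $O(1)$ admissible $m_1$, and Cauchy--Schwarz gives $\sumsum_{m_1\equiv m_2}\lvert\alpha_{m_1}\alpha_{m_2}\rvert\ll\Vert\mathbf\alpha\Vert_2^2$, whence a contribution $\ll N\Vert\mathbf\alpha\Vert_2^2$. For the off-diagonal $m_1\not\equiv m_2\pmod q$ I would complete the $n$-sum modulo $q$ by Poisson summation: by the size and smoothness of the weight, only $O\bigl(q^\varepsilon(1+qQ/N)\bigr)$ frequencies $h$ contribute significantly (there are at most $q$ of them in any case), each of them $q^{-1}$ times a Fourier transform of the weight of size $\ll N$ times the complete correlation sum
\[
\mathcal{C}(m_1,m_2;h)=\sum_{a\bmod q}\Kl(m_1a;q)\overline{\Kl(m_2a;q)}e_q(-ha).
\]
Granting the key bound $\mathcal{C}(m_1,m_2;h)\ll q^{1/2}$ with an absolute implied constant, valid for all $h$ as soon as $m_1\not\equiv m_2\pmod q$, the inner $n$-sum is $\ll q^{1/2+\varepsilon}Q$ uniformly — if $N\geq q^\varepsilon Q$ this uses the frequency count $q^{1+\varepsilon}Q/N$, and if $N<q^\varepsilon Q$ one instead uses all $q$ frequencies together with $N\leq q^\varepsilon Q$ — and $\sum_m\lvert\alpha_m\rvert\leq M^{1/2}\Vert\mathbf\alpha\Vert_2$ turns the off-diagonal into $\ll Mq^{1/2+\varepsilon}Q\,\Vert\mathbf\alpha\Vert_2^2$. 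Combining the two ranges gives $|S|^2\ll\Vert\mathbf\alpha\Vert_2^2\Vert\mathbf\beta\Vert_2^2\bigl(N+Mq^{1/2+\varepsilon}Q\bigr)$, and taking square roots is exactly the claimed inequality.

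The only genuine obstacle is the bound $\mathcal{C}(m_1,m_2;h)\ll q^{1/2}$ for $m_1\not\equiv m_2\pmod q$; everything else is elementary harmonic analysis. This is where algebraic geometry enters: one interprets $\mathcal{C}(m_1,m_2;h)$ as a complete sum of the trace function of a tensor construction built from the Kloosterman sheaf $\HYPK_2$ and an Artin--Schreier sheaf $\sheaf{L}_{\psi(h\,\cdot)}$, and invokes Deligne's Riemann Hypothesis over finite fields. The required cancellation follows once one knows that this sheaf is pure of weight $0$, has no trivial geometric constituent, and has conductor bounded independently of $q$; for this one uses Katz's analysis of Kloosterman sheaves — $\HYPK_2$ is geometrically irreducible with geometric monodromy group $\SL_2$, tame at $0$ and totally wild with Swan conductor $1$ at $\infty$ — together with the observation that comparing ramification forbids $[\times(m_1\overline{m_2})]^*\HYPK_2$ from being geometrically isomorphic to $\HYPK_2\otimes\sheaf{L}_{\psi(h\,\cdot)}$ unless simultaneously $m_1\equiv m_2\pmod q$ and $h=0$. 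I would therefore, exactly as in \cite[Thm.~1.17]{FKM2}, take this step as a black box from the work of Katz and of Fouvry--Kowalski--Michel on correlation sums, obtaining the stated estimate with implied constant depending only on $\varepsilon$.
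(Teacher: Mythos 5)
Your proposal is correct and is essentially the paper's own argument: the paper simply cites \cite[Thm.~1.17]{FKM2}, whose proof is exactly the Cauchy--Schwarz/completion/Deligne scheme you reconstruct, and notes that the smooth weight $W(mn/Y)$ costs a factor $Q$. The one cosmetic difference is in how that weight is treated: the paper removes $W(m_1n/Y)W(m_2n/Y)$ by partial summation in the off-diagonal terms, whereas you absorb it into the smooth weight on $n$ and let its derivatives (of size $(q^{\varepsilon}Q/N)^j$ by \eqref{Wbound}) enlarge the Poisson frequency count to $O(q^{\varepsilon}(1+qQ/N))$; both routes produce the same factor $Q$ and the stated bound.
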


This is a special case of \cite[Thm.~1.17]{FKM2} when $W$ is the
constant $1$. For smooth $W$, the same proof applies, except that we apply partial summation in \cite[(3.2)] {FKM2} if $m_1 \not= m_2$ to remove the weight $W(m_1n/Y) W(m_2n/Y)$; this produces a factor $Q$ that after taking square roots produces the above bound.

\section{Correlation sums of Kloosterman sums and Hecke eigenvalues}

In this section we prove Proposition \ref{propIScuspidal}.  We replace
the tempered Voronoi summation formula by the Voronoi summation
formula for cusp forms, which we state in a form suited to our purpose.

\begin{proposition}[Voronoi summation formula for cusp forms with arithmetic weights modulo
  primes]\label{prvoronoi} 
  Let $q $ be a prime. Let $W$ be a smooth function compactly supported in
  $]0,\infty[$ and let $f$ be a holomorphic cuspidal Hecke eigenform
  of level $1$ and weight $k$.  Let $\eps(f)=\pm 1$ denote the sign of
  the functional equation of the Hecke $L$-function $L(f,s)$ and let
  \[
  \widetilde W(y)=\int_{0}^\infty W(u)\mathcal{J}_k(4\pi\sqrt{ uy})du,
\]
where
\begin{equation*}
  \mathcal{J}_k(u)  =
  2\pi i^kJ_{k-1}(u). 
  \end{equation*}
Then, for any $q$-periodic arithmetic function $K\colon \Zz\to\Cc$, we have
\[
\sum_{n\geq 1}\lf(n)K(n)W\Bigl(\frac nN\Bigr)= \frac{\what K(0)}{q^{1/2}}\sum_{n\geq 1} \lf(n)W\Bigl(\frac nN\Bigr)+\\
\eps(f)\frac{N}{q} \sum_{n\geq 1}\lf(n)\widecheck K( n)\widetilde
W\Bigl(\frac{nN}{q^2}\Bigr).
\]
In particular, for   $a$ coprime to $q$, we have
\begin{multline*}
  \sum_{n\geq 1} \lf(n)\Kl(an;q)W\Bigl(\frac nN\Bigr)=
  \eps(f)\frac{N}{q^{1/2}} \sum_{n\equiv a \mods q
  }\lf(n)\widetilde W\Bigl(\frac{nN}{q^2}\Bigr)
  - \eps(f)\frac{N}{q^{3/2}} \sum_{n\geq 1 }\lf(n)\widetilde
  W\Bigl(\frac{nN}{q^2}\Bigr).
  \end{multline*}
\end{proposition}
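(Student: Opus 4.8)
The plan is to deduce the identity from the classical additively twisted Voronoi summation formula for the holomorphic Hecke eigenform $f$ of level one. First I would expand the $q$-periodic function $K$ into additive characters by Fourier inversion: since $q$ is prime,
\[
K(n)=\frac{1}{\sqrt q}\sum_{h\bmod q}\fourier{K}(h)\,e_q(-hn)=\frac{\fourier{K}(0)}{\sqrt q}+\frac{1}{\sqrt q}\sum_{\substack{h\bmod q\\(h,q)=1}}\fourier{K}(h)\,e_q(-hn)
\]
for every $n\in\Zz$. Multiplying by $\lf(n)W(n/N)$ and summing over $n\ge 1$ (a finite sum, since $W$ has compact support), the term $h\equiv 0$ contributes exactly $\fourier{K}(0)q^{-1/2}\sum_{n\ge 1}\lf(n)W(n/N)$, which is the first term on the right-hand side.

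Next, to each of the remaining sums $\sum_{n\ge 1}\lf(n)e_q(-hn)W(n/N)$ with $(h,q)=1$ I would apply the classical Voronoi summation formula for $f$ with additive twist of modulus $q$. This transforms the twist $e_q(-hn)$ into $e_q(\bar h n)$ and yields, up to an explicit archimedean constant, the dual sum $\frac{1}{q}\sum_{n\ge 1}\lf(n)e_q(\bar h n)\int_0^\infty W(x/N)\,J_{k-1}\bigl(4\pi\sqrt{nx}/q\bigr)\,dx$. Performing the substitution $x=Nu$ in the integral turns it into $N\int_0^\infty W(u)\,J_{k-1}\bigl(4\pi\sqrt{u\cdot nN/q^2}\bigr)\,du$, which by the definitions $\widetilde W(y)=\int_0^\infty W(u)\mathcal{J}_k(4\pi\sqrt{uy})\,du$ and $\mathcal{J}_k(u)=2\pi i^k J_{k-1}(u)$ equals $(2\pi i^k)^{-1}N\,\widetilde W(nN/q^2)$; together with $\eps(f)=i^k$ for level one, the archimedean constant of the classical formula then combines with this to produce exactly the factor $\eps(f)N/q$, so that
\[
\sum_{n\ge 1}\lf(n)e_q(-hn)W\Bigl(\frac nN\Bigr)=\eps(f)\frac Nq\sum_{n\ge 1}\lf(n)e_q(\bar h n)\widetilde W\Bigl(\frac{nN}{q^2}\Bigr).
\]
Substituting this into the $(h,q)=1$ part of the Fourier expansion and moving the finite sum over $h$ inside the sum over $n$ (legitimate since $\widetilde W(y)\ll_A(1+y)^{-A}$ by repeated integration by parts, $\widetilde W(y)\ll y^{(k-1)/2}$ for small $y$, and $|\lf(n)|\le d(n)$, so the dual sums converge absolutely), the inner sum $\frac{1}{\sqrt q}\sum_{(h,q)=1}\fourier{K}(h)e_q(\bar h n)$ is precisely $\bessel{K}(n)$, and the general identity follows.

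For the displayed special case I would take $K(n)=\Kl(an;q)$ with $(a,q)=1$ and invoke Lemma~\ref{immediate}: there $\fourier{K}(0)=0$, so the first term vanishes, and $\bessel{K}(n)=q^{1/2}\,\charfun_{n\equiv a\,(q)}-q^{-1/2}$. Splitting $\sum_{n\ge 1}\lf(n)\bessel{K}(n)\widetilde W(nN/q^2)$ according to these two values gives
\[
\eps(f)\frac Nq\sum_{n\ge 1}\lf(n)\bessel{K}(n)\widetilde W\Bigl(\frac{nN}{q^2}\Bigr)=\eps(f)\frac{N}{q^{1/2}}\sum_{n\equiv a\,(q)}\lf(n)\widetilde W\Bigl(\frac{nN}{q^2}\Bigr)-\eps(f)\frac{N}{q^{3/2}}\sum_{n\ge 1}\lf(n)\widetilde W\Bigl(\frac{nN}{q^2}\Bigr),
\]
which is the asserted formula.

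The only delicate point is the bookkeeping: pinning down the archimedean constant (the factor $2\pi$ and the power of $i^k$) and the modulus scaling ($1/q$ outside versus $1/q^2$ inside the Bessel argument) in the classical formula so that they match the normalisations of $\widetilde W$ and $\bessel{K}$ adopted here and produce exactly the factor $\eps(f)$. There is no genuine analytic difficulty: the rapid decay of $\widetilde W$ makes every interchange of summation and every convergence assertion immediate, and the level-one hypothesis means the additive-twist Voronoi formula is available for every modulus $q$ with no coprimality restriction.
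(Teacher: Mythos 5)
Your proposal is correct and follows essentially the same route as the paper: expand the $q$-periodic weight $K$ into additive characters via Fourier inversion, apply the classical additively twisted Voronoi summation formula for level-one holomorphic forms (the paper cites \cite[Theorem A.4]{KMVDMJ} for exactly the identity you derive), recognise the resulting inner sum over $h$ as $\bessel{K}(n)$, and specialise via Lemma~\ref{immediate}. The extra bookkeeping you supply on the archimedean constants and the convergence of the dual sum is consistent with the stated normalisations and fills in details the paper leaves implicit.
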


\begin{proof}
  We expand $K(n)$ into additive characters
\[
K(n)=\frac{1}{q^{1/2}}\sum_{a\mods q}\what K(a)e_q(-an)
\]
and apply the classical summation formula
\[
\sum_{n\geq 1} \lf(n)W\Bigl(\frac{n}N\Bigr)e\Bigl(-\frac{an}{q}\Bigr) =
\eps(f)\frac{N}{q} \sum_{n\geq
  1}\lf(n)e\Bigl(\frac{\overline{a}n}{q}\Bigr) \widetilde
W\Bigl(\frac{Nn}{q^2}\Bigr),
\]
valid for all $N>0$ and all $a$ coprime to $q$ (\cite[Theorem A.4]{KMVDMJ}).
\end{proof}

We can now easily prove Proposition \ref{propIScuspidal}: integration
by parts shows that for any $A\geq 0$ and $\eps>0$ we have
\[\widetilde W\Bigl(\frac{nN}{q^2}\Bigr)\ll_{k,A,\eps}
q^\eps\Bigl(1+\frac{nN}{q^2}\Bigr)^{-A}\] (see \cite[Lemma 2.4]{445}),
so that (using Deligne's bound  $|\lf(n)|\leq d(n)\ll_\eps n^\eps$), we get
\[
\sum_{n} \lf(n)\Kl(an;q)W\Bigl(\frac nN\Bigr)\ll_{\eps,k} (qN)^\eps
\Bigl(q^{1/2}+\frac{N}{q^{1/2}}\Bigr).
\]

\section{Application to the fourth moment of Dirichlet \texorpdfstring{$L$-functions}{L-functions}}
In this section we prove Theorem \ref{472}. The general strategy of
the proof has been explained in detail in our paper \cite{445}. We
assume some familiarity with this paper, and refer in particular to
\cite[\S 1.2, \S 6.1, \S 6.3]{445} for notations. 

We begin with the unconditional bound. Let
\begin{multline*}
  B_{E, E}^{\pm}(M,N) =\frac{1}{(MN)^{1/2}}\sum_{\substack{m\equiv\pm n\mods q \\ m \not=  n}}{d(m)d(n)}W_1\Bigl(\frac{m}M\Bigr)W_2\Bigl(\frac{n}N\Bigr)\\
  - \frac{1}{q(MN)^{1/2}} \sum_{m, n} d(m) d(n)
  W_1\Bigl(\frac{m}M\Bigr)W_2\Bigl(\frac{n}N\Bigr).
\end{multline*}
Our objective is to prove that
for $\eta=\bexp$ one has
\begin{equation}\label{eqgoal}
B_{E,E}^{\pm}(M,N)-\mathrm{MT}^{od,\pm}_{E,E}(M,N)=:\mathrm{ET}_{E,E}^{\pm}(M,N)\ll_\eps q^{-\eta+o(1)},	
\end{equation}
where $\mathrm{MT}^{od,\pm}_{E,E}(M,N)$ is a suitable main term (described in \cite{MY}) and $M,N$ range over a set of $O(\log^2q)$ real numbers satisfying
\[1\leq M\leq N,\ MN\leq q^{2+o(1)}\]
(the first bound is by symmetry, the second is the length of the approximate functional equation). We set
\[N^*=q^2/N,\ M=q^\mu,\ N=q^\nu,\ \nu^*=2-\nu,\]
so that
\[ 0\leq \mu\leq\nu,\quad  -\varepsilon \leq \nu^*-\mu. \]

In view of the bound \cite[(3.18)]{445}, which reads
$$\ET^{\pm}_{E,E}(M,N)  \ll q^{\eps}\Bigl( \frac{N}{qM}\Bigr)^{1/4} \left(1+ \Bigl( \frac{N}{qM}\Bigr)^{1/4} \right)$$ 
and which  is proved using spectral theory, we may also assume that
\begin{equation}\label{4eta}
\mu+\nu^*\leq 1+4\eta
\end{equation}
for otherwise \eqref{eqgoal} is certainly true. 
Proceeding in the same way as in \cite[\S 6.3]{445}, we apply Voronoi summation to  reduced to
the following bounds for $O(\log^4 q)$ sums of the shape
\begin{multline*}
  S^\pm(M_1,M_2,M_3,M_4)=\frac{1}{(qMN^*)^{1/2}}\sumsum_{m_1,m_2,m_3,m_4}
  W_1\left(\frac{m_1}{M_1}\right)W_2\left(\frac{m_2}{M_2}\right) \\
\times  W_3\left(\frac{m_3}{M_3}\right)W_4\left(\frac{m_4}{M_4}\right)
  \Kl(\pm m_1m_2m_3m_4;q)\ll q^{-\eta+o(1)},
\end{multline*}
where the $W_i$ satisfy~(\ref{Wbound}) with $Q=q^{\eps}$, and the
$M_i$ written in the shape $M_i=q^{\mu_i}$, $i=1,2,3,4$, satisfy
\[ \mu_1\leq \mu_2\leq \mu_3\leq\mu_4,\quad 0\leq
  \mu_1+\mu_2+\mu_3+\mu_4 = \mu+\nu', \quad \nu' \leq \nu^*. \] By the
trivial bound for Kloosterman sums (and recalling \eqref{4eta}), we may assume that
\begin{equation}\label{munu*range}
1-2\eta\leq \mu+\nu'\leq \mu+\nu^*\leq 1+4\eta,
\end{equation}
for otherwise \eqref{eqgoal} is true. 

We use the same strategy as in \cite[\S 6.3]{445}, except that we
replace \cite[(5.5)]{445} by Proposition \ref{propSZ}.  Thus, if the largest
variables $m_3,m_4$ are large enough, we apply \eqref{SZboundsmooth} to
them (fixing $m_1,m_2$); otherwise, we find it more beneficial to
group variables differently producing a bilinear sum of Kloosterman
sums to which we apply Proposition \ref{proptypeII}.

Explicitly, using \eqref{SZboundsmooth} we obtain that
\begin{align*}
  S^\pm(M_1,M_2,M_3,M_4) 
  &\ll q^{o(1)} \frac{M_1M_2}{(qMN^*)^{1/2}}\Bigl(q^{1/2}+\frac{M_3M_4}{q^{1/2}}\Bigr)\\
  & \ll
    q^{o(1)}\Bigl(\sqrt{\frac{M_1M_2}{M_3M_4}}+\frac{(MN')^{1/2}}q\Bigr)
    \ll q^{o(1)}\Bigl(\sqrt{\frac{M_1M_2}{M_3M_4}}+q^{-\eta}\Bigr)
\end{align*}
since $q^{\frac12(1+4\eta)-1}\leq q^{-\eta}$.  We may therefore assume
that
\begin{equation}\label{SZcondition}
0\leq \mu_3+\mu_4-(\mu_1+\mu_2)\leq 2\eta.	
\end{equation}
We now apply Proposition \ref{proptypeII} with ${\tt M}=M_4$ and ${\tt N}=M_1M_2M_3$ so that ${\tt MN}=q^{\mu+\nu'}\leq MN^*$ and derive  
\[ S^\pm(M_1,M_2,M_3,M_4)\ll q^{o(1)}\big(q^{\frac{\mu_1+\mu_2+\mu_3-1}2}+q^{-\frac{1}4+\frac{\mu_4}2}\big). \]
We claim that under the current assumptions both exponents on the right hand side are $\leq - \eta$, which completes the proof. Indeed,  since $\mu_4\geq \mu_i$ for $i=1$,
$2$, $3$,  we obtain by \eqref{munu*range} that 
\[ \Bigl(1+\frac13\Bigr)(\mu_1+\mu_2+\mu_3)\leq \mu_1+\mu_2+\mu_3+\mu_4\leq 1+4\eta\implies 
\mu_1+\mu_2+\mu_3\leq \frac34+3\eta, \]
hence 
\[ {\frac{\mu_1+\mu_2+\mu_3-1}2}\leq {-\frac{1}8+\frac{3}2\eta}\leq {-\eta}. \]
Moreover, by \eqref{SZcondition} and \eqref{munu*range} (since
$\mu_1\leq \mu_2\leq \mu_3\leq \mu_4$) we have
\[ \mu_4\leq 2\eta+\mu_1+\mu_2-\mu_3\leq 2\eta+\mu_1\leq 2\eta+\frac{1}3(1+4\eta-\mu_4)=\frac{1}3+\frac{10}3\eta-\frac13\mu_4,\]
which implies that $\mu_4\leq \frac14+\frac52\eta,$ 
and so
\[ -\frac{1}4+\frac{\mu_4}2\leq-\frac18+\frac54\eta\leq-\eta. \]

If the Ramanujan--Petersson conjecture is available, we can use
\cite[(1.7)]{445} with $\theta = 0$ in place of \cite[(3.2)]{445} and
replace \eqref{4eta} with $\mu+\nu^*\leq 1+2\eta.$ Then the same
strategy leads to the numerical value $\eta = 1/16$.

\section{Sums of Kloosterman sums along the primes: proof of Theorem \ref{Klpq}}  
 \subsection{Proof of inequality \eqref{evening2}} 

 We now recall the main ideas of the proof of \cite[Thm.~1.5]{FKM2},
 since our proof will follow the same path until the moment we use
 Proposition \ref{propSZ}. We will incorporate some shortcuts and
 combinatorial improvements to \cite{FKM2}, mainly due to the
 assumption $X \leq q$.  By \cite[p.~1711--1716]{FKM2}, we are
 reduced to proving the same bound as \eqref{evening2} for the sum
\[
\mathcal S_{W,X} (\Lambda, \Kl):= \sum_n \Lambda (n) \Kl (n;q) W
\Bigl( \frac{n}{X} \Bigr),
\]
where $\Lambda$ is the von Mangoldt function.  We now apply
Heath-Brown's identity \cite{HB} with integer parameter $J\geq 2$. This
decomposes $\mathcal S_{W,X} (\Lambda, \Kl)$ into a linear
combination, with coefficients bounded by $O_J (\log X)$, of
$O(\log^{2J} X)$ sums of the shape
\begin{multline}\label{HBdecomp}
  \Sigma (\uple{M}, \uple{N})=\underset{m_1, \dots, m_J}{\sum\cdots
    \sum}
  \alpha_1(m_1) \alpha_2(m_2) \cdots \alpha_J (m_J)\\
  \times \underset{n_1, \dots, n_J}{\sum \cdots\sum} V_1
  \Bigl(\frac{n_1}{N_1}\Bigr) \cdots V_J \Bigl(\frac{n_J}{N_J}\Bigr) W \Bigl(
  \frac{m_1\cdots m_J n_1\cdots n_J}{X}\Bigr) \Kl (m_1\cdots
  m_Jn_1\cdots n_J;q)
\end{multline}
where
\begin{itemize}
\item $\uple{M}= (M_1, \dots, M_J)$, $\uple{N} =(N_1, \dots, N_J)$
  are $J$-tuples of parameters in $[1/2, 2X]^{2J}$ which satisfy
\begin{equation}\label{restr}
N_1 \geq N_2 \geq \cdots \geq N_J, \ \ M_i \leq X^{1/J}, \ \ M_1\cdots M_J N_1 \cdots N_J\asymp_J X;
\end{equation}
\item the arithmetic functions $m\mapsto \alpha_i (m)$ are bounded and
  supported in $[M_i/2, 2M_i]$;
\item the smooth functions $x\mapsto V_i (x)$ satisfy~(\ref{Wbound})
  with parameter $Q$.
\end{itemize}

It now remains to study    the sum $\Sigma (\uple{M}, \uple{N})$ defined in \eqref{HBdecomp}  for every $(\uple{M}, \uple{N})$ as above. We estimate $\Sigma(\uple{M},\uple{N})$ in two ways.

Our first method is to  bound  $\Sigma (\uple{M}, \uple{N})$ by applying \eqref{SZboundsmooth1} to the largest smooth variables $n_1$ and $n_2$ in 
$\Sigma(\uple{M},\uple{N})$ and a trivial summation over the other variables. We obtain 
\begin{equation*}
\Sigma(\uple{M},\uple{N}) \ll q^\varepsilon Q^2 X \Bigl( \frac{q^{1/2}}{N_1 N_2}+ \frac{1}{q^{1/2}}\Bigr),
\end{equation*}
which, by \eqref{restr} and the assumption $X\leq q$,  simplifies into
\begin{equation}\label{ineq3}
\Sigma(\uple{M},\uple{N}) \ll q^\varepsilon Q^2 X\ \bigl( q^{1/2}/ (N_1N_2)\bigr).
\end{equation}

Our second method is to apply Proposition \ref{proptypeII} to  $\Sigma (\uple{M}, \uple{N})$; in this way we obtain  
\begin{equation}\label{ineq1}
\Sigma (\uple{M}, \uple{N}) \ll q^\varepsilon Q^{1/2}\,X\, \Bigl( \frac{1}{M^{1/2}} +\frac{q^{1/4}}{(X/M)^{1/2}}\Bigr)
\end{equation}
for any factorization 
\[
M_1\cdots M_JN_1\cdots N_J =M\times N.
\]

We have now to play with \eqref{ineq3} and \eqref{ineq1} in an optimal way to bound $\Sigma (\uple{M}, \uple{N})$.  We follow the same presentation as in \cite[\S 4.2]{FKM2}. We introduce the real numbers $\kappa$, $x$, $\mu_i$, $\nu_j$,  $1\leq i,j\leq J$,  defined by
\[ Q=q^\kappa,\  X=q^x,\ M_i=q^{\mu_i},\ N_j=q^{\nu_j} \]
and we set
\[( \uple{m}, \uple{n}) =(\mu_1, \dots, \mu_J, \nu_1, \dots, \nu_J) \in [0, x]^{2J}.
\]

The conditions \eqref{restr} are reinterpreted as
\begin{equation}\label{restr1}
  \sum_i \mu_i +\sum_j \nu_j =x\leq 1, \quad \mu_i \leq x/J, \quad \nu_1 \geq \nu_2\geq \cdots \geq \nu_J.
\end{equation}

According to \eqref{ineq3} and \eqref{ineq1}, we introduce the function (compare with \cite[definition (4.5)]{FKM2})
$\eta (\uple{m}, \uple{n})$ defined by
\begin{equation}\label{defeta}
\eta (\uple{m}, \uple{n}):= \max\Bigl\{ 
 (\nu_1+\nu_2) -\frac{1}{2}-2 \kappa \ ;\,  \max_\sigma \min
\Bigl(  \frac{\sigma}{2}, \frac{x-\sigma}{2} -\frac{1}{4} \Bigr) -\frac{\kappa}{2}
\Bigr\},
\end{equation}
where $\sigma$ ranges over all possible sub-sums of the $\mu_i$ and $\nu_j$ for $1 \leq i, j \leq J$, that is, over the sums
\[
\sigma =\sum_{i \in \mathcal I} \mu_i +\sum_{j \in \mathcal J} \nu_j,
\]
for $\mathcal I $ and $\mathcal J$ ranging over all possible subsets of $\{1, \dots, J\}$.

With these conventions, as a consequence of \eqref{ineq3} and \eqref{ineq1} we have the inequality
\[
\Sigma(\uple{M},\uple{N}) \ll (qQ)^\varepsilon  q^{-\eta (\uple{m}, \uple{n})}\,X,
\]
  and finally, summing aver all possible $(\uple M, \uple N)$, we have the inequality
\begin{equation}\label{793}
\mathcal S_{W,X} (\Lambda, \Kl) \ll (qQ)^\varepsilon \, q^{-\eta}\, X,
\end{equation}
where 
\[
\eta = \min_{(\uple{m}, \uple{n})} \eta (\uple{m}, \uple{n}), 
\]
where $( \uple{m}, \uple{n})$ satisfy \eqref{restr1}. 

The estimate~\eqref{evening2} is trivial for $x<3/4$, so we may assume that $3/4 \leq  x \leq 1$. For $\varepsilon >0$ sufficiently small, let $\mathcal I_x$ be the interval
\[
\mathcal I_x = [x/6-\varepsilon, x/3+\varepsilon],
\]
and choose $J=10$ to apply Heath-Brown's identity.

We now consider two different cases in the combinatorics of $(\uple{m}, \uple{n})$. 
\begin{itemize}
\item If $(\uple{m}, \uple{n})$  contains a subsum $\sigma \in \mathcal I_x$, then, by \eqref{defeta}, we have the inequality
\[
\eta (\uple{m}, \uple{n}) \geq \min \Bigl(\frac{x/6}{2}, \frac{x-x/3}{2}-\frac{1}{4}\Bigr)-\frac{\kappa}{2}-\frac{\varepsilon}{2},
\]
which simplifies into
\begin{equation}\label{firstbound}
\eta (\uple{m}, \uple{n}) \geq \frac{x}{3} -\frac{1}{4} -\frac{\kappa}{2} -\frac{\varepsilon}{2}.
\end{equation}
\item If $(\uple{m}, \uple{n})$  contains no subsum $\sigma \in \mathcal I_x$, then the sum of all the $\mu_i$ and $\nu_j$ which are less than  $x/6-\varepsilon$ is also less than $x/6-\varepsilon$ (this is a consequence of the inequality $2(x/6-\varepsilon) < x/3 +\varepsilon$). In light of~\eqref{restr1}, this includes all $\mu_i$, and so some $\nu_j$ must be greater than $x/3+\varepsilon$. On the other hand, since $3 (x/3 +\varepsilon) > x$, we deduce that at most two $\nu_i$  (more precisely, $\nu_1$ or $\nu_1$ and $\nu_2$) are greater than $x/3 +\varepsilon$. Combining these remarks, we deduce the inequality
\[
\nu_1+\nu_2 \geq x-(x/6 -\varepsilon) = 5x/6 +\varepsilon,
\]
which implies, by  \eqref{defeta},  the inequality
\begin{equation}\label{secondbound}
\eta (\uple{m}, \uple{n}) \geq \frac{5x}{6}- \frac{1}{2} -2 \kappa -\varepsilon.
\end{equation}
\end{itemize}
By  \eqref{793}, \eqref{firstbound} and \eqref{secondbound}, we deduce the inequality
\begin{equation}\label{828}
\mathcal S_{W,X} (\Lambda, \Kl) \ll  (qQ)^\varepsilon \bigl( q^{1/4} Q^{1/2} X^{2/3} + q^{1/2} Q^2 X^{1/6}
\bigr).
\end{equation}
In the above upper bound, the first  term is larger than the second  one if and only if $Q<q^{-1/6} X^{1/3}$,  and in this case, we have $Q^\varepsilon < q^\varepsilon$. However, when $Q\geq  q^{-1/6} X^{1/3}$, it is easy to see that the bound  \eqref{evening2} is trivial since we have
\[
q^{1/4} Q^{1/2} X^{2/3}\geq q^{1/4} (q^{-1/6} X^{1/3})^{1/2} X^{2/3}=   q^{1/6} X^{5/6} \geq  X,
\]
since we suppose $X\leq q.$ In conclusion, we may drop the second term on the right-hand side of \eqref{828}. This remark completes the proof of \eqref{evening2}.

\subsection{Proof of inequality \eqref{evening1}}  The proof mimics the proof appearing in \cite[\S 4.3]{FKM2}. By a simple subdivision, it is sufficient to prove the inequality
\begin{equation}\label{836}
 \sum_\stacksum{X<p\leq\frac32X}{p\text{ prime}}\Kl(p;q)\ll q^{1/6+\eps}\, X^{7/9}.
\end{equation}
Let $\Delta <1/2$ be some parameter, let $W$ be a smooth function defined on $[0, +\infty[$ such that
\[
\supp (W) \subset [1-\Delta, \textstyle \frac{3}{2}+\Delta], \ 0\leq W \leq 1, \ W(x) =1 \text{ for } 1 \leq x \leq \frac{3}{2}, 
\]
and such that the derivatives satisfy
\[
x^j W^{(j)} (x)\ll_j Q^j,
\]
with $Q=\Delta^{-1}$. By applying \eqref{evening2}, we have
\begin{align*}
 \sum_\stacksum{X<p\leq \frac{3}{2}X}{p\text{ prime}}\Kl(p;q)& \ll \Delta X + 1 + \Bigl\vert\, \sum_p W \Bigl( \frac{p}{X}\Bigr) \Kl (p;q)\, \Bigr\vert\\
 & \ll \Delta X +  q^{1/4 +\varepsilon} Q^{1/2} X^{2/3} \ll q^{1/6 +\varepsilon} X^{7/9},
  \end{align*}
by the choice $\Delta =q^{1/6} X^{-2/9} < 1/2$ (the claim is trivial if $q^{1/6} \geq \frac{1}{2}X^{2/9}$). This completes the proof of \eqref{836}.

\begin{bibdiv}

\begin{biblist}

\bib{445}{article}{
 author={V. Blomer},
 author={\' E. Fouvry},
 author={E. Kowalski},
 author={Ph. Michel},
 author={D. Mili\'cevi\' c},
 title={On moments of twisted $L$-functions},
 journal={Amer. J. Math.},
 date={to appear, \url{arXiv:1411.4467}},
 }

 \bib{FKM2}{article}{
   author={{\'E}. Fouvry },
   author={E. Kowalski},
   author={Ph. Michel},
   title={Algebraic trace functions over the primes},
   journal={Duke Math. J.},
   volume={163},
   date={2014},
   number={9},
   pages={1683--1736},
}
 
\bib{FKM1}{article}{
  author={{\'E}. Fouvry},
  author={E. Kowalski},
  author={Ph. Michel},
  title={Algebraic twists of modular forms and Hecke orbits},
 journal={Geom. Funct. Anal.},
  volume={25},
  date={2015},
  number={2},
  pages={580--657}, 
 }

\bib{FKMd3}{article}{
  author={{\'E}. Fouvry},
  author={E. Kowalski },
  author={Ph. Michel },
  title={On the exponent of distribution of the ternary divisor function},
  journal={Mathematika},
  volume={61},
  date={2015},
  number={1},
  pages={121--144},
}

\bib{HB}{article}{
 author = {D. R. Heath-Brown},
 title = {Prime numbers in short intervals and a generalized Vaughan identity},
 journal={Canad. J. Math.},
 volume={34}, 
 date={1982}, 
 pages={1365--1377},}

\bib{KMVDMJ}{article}{
   author={E. Kowalski },
   author={Ph. Michel},
   author={J. VanderKam},
   title={Rankin-Selberg $L$-functions in the level aspect},
   journal={Duke Math. J.},
   volume={114},
   date={2002},
   number={1},
   pages={123--191},
 
}

\bib{SZ}{article}{
   author={I. Shparlinski},
   author={T. P. Zhang},
   
   title={Cancellations amongst Kloosterman sums},
   journal={Acta Arith.},
   note={(to appear, \url{arXiv:1601.05123})},
}

\bib{MY}{article}{
 author={M. P. Young},
 title={The fourth moment of Dirichlet $L$-functions},
 journal={Ann. of Math. (2)},
 pages={1--50},
date={2011},
volume={173},
number={1},
}

\end{biblist}

\end{bibdiv}

\end{document}